
\documentclass[reqno]{amsart}
\usepackage{amsmath}
\usepackage{amssymb}
  \usepackage{paralist}
  \usepackage{graphics} 
 \usepackage[colorlinks=true]{hyperref}

\usepackage{mathrsfs}
\usepackage{extarrows}
\usepackage{enumerate}

\newtheorem{theorem}{Theorem}[section]

\newtheorem{lemma}[theorem]{Lemma}
\newtheorem{proposition}[theorem]{Proposition}

\theoremstyle{definition}
\newtheorem{definition}[theorem]{Definition}

\newcommand{\ep}{\varepsilon}

\newcommand{\ZZ}{\mathbb{Z}}

\newcommand{\sC}{\mathscr{C}}

\newcommand{\sg}{\mathscr{G}}

\title[Reparametrized Gluing Orbit Property]
      {On the Entropy of Flows with Reparametrized Gluing Orbit Property}

\author[Peng Sun]{}

\subjclass[2010]{Primary: 37B05, 37B40, 37C50.
        Secondary: 37B20}
 \keywords{flow,
 gluing orbit property, minimality, topological entropy, reparametrization.  }

 \email{sunpeng@cufe.edu.cn}


\begin{document}

\maketitle\ 

\centerline{\scshape Peng Sun}
\medskip
{\footnotesize
 \centerline{China Economics and Management Academy}
   \centerline{Central University of Finance and Economics}
   \centerline{Beijing 100081, China}
} 

\bigskip

\begin{abstract}
We show that a flow or a semiflow with a weaker reparametrized form of gluing orbit property
is either minimal or of positive topological entropy.

\end{abstract}


\section{Introduction}

The gluing orbit property introduced in \cite{ST}, \cite{CT}
and \cite{BV} is a much weaker variation of the well-studied 
specification property. 
It is satisfied by a larger class of systems but is still productive,
as indicated by a series of recent works (see also \cite{BTV0},
\cite{BTV},
\cite{CLT}, \cite{TWW} and \cite{XZ}). 
A system with gluing orbit property may have zero topological
entropy, which is different from those with specification property.
However, it seems that such a system should be quite simple.
In \cite{Sun1}, we show that it must be minimal. In \cite{Sun1}
we have made  an effort to show that our results
holds in both discrete-time and continuous-time cases. In communication with
Paulo Varandas, we realize that such results should be more practical for flows if
a reparametrization is allowed. Then we have to overcome a bunch of 
technical difficulties. Finally, we are convinced that the result extends to
a even more general case.

\begin{theorem}\label{posent}
Let $(X,f)$ be a flow or semiflow with weak reparametrized gluing orbit property. 
Then either it is minimal, or
it has positive topological entropy.
\end{theorem}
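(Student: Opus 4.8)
I would argue by contraposition: assuming $(X,f)$ is not minimal, I aim to produce a fixed positive lower bound for its topological entropy. The first step is to extract, via Zorn's lemma applied to the compact (forward-)invariant subsets ordered by inclusion, a minimal set $Y\subsetneq X$, then to fix a point $q\in X\setminus Y$ and a scale $\ep>0$ with $d(q,Y)\ge 5\ep$; write $M:=M(\ep)$ for the associated gluing constant. The overall idea is to use the weak reparametrized gluing orbit property to code many words into orbit segments of comparable length $O(r)$ in such a way that distinct words yield $(T,\ep)$-separated points, and then to read the entropy bound off the count.

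\textbf{Building two templates.} Next I would fix $p\in Y$ and a small time $\tau_0$, and apply the gluing orbit property at scale $\ep$ to the three short segments $(p,\tau_0),(q,\tau_0),(p,\tau_0)$. This produces an orbit segment $\beta$, of length at most $3\tau_0+2M$, that starts and ends within $\ep$ of $p$ --- hence within $\ep$ of $Y$ --- and that at some interior instant comes within $2\ep$ of $q$, so to distance $\ge 3\ep$ from $Y$. For the second template I take $\alpha$ to be the orbit segment of $p$ over a time interval of length $\ell_\alpha:=cM$, for a large constant $c$ to be fixed; $\alpha$ stays in $Y$, and the terminal point of either template lies within $\ep$ of $Y$.

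\textbf{Coding and separation.} For an integer $K\ge 2$ and a word $w=(m_1,\dots,m_r)\in\{1,\dots,K\}^r$, I would glue (via the gluing orbit property at scale $\ep$, with all gaps in $[0,M]$) the concatenation $\beta,\alpha^{m_1},\beta,\alpha^{m_2},\dots,\beta,\alpha^{m_r}$, appending if needed a further orbit arc of $p$ (which stays in $Y$) so that the resulting point $z_w$ has its orbit controlled on a common interval $[0,T]$ with $T\le Cr$, $C=C(c,K,M)$. The orbit of $z_w$ stays within $\ep$ of $Y$ except inside the $\beta$-blocks, where it makes an excursion to within $2\ep$ of $q$; hence the times at which $z_w$ meets the $2\ep$-ball about $q$ fall into $r$ disjoint short windows $E_1<\dots<E_r$ with
\[
E_{i+1}-E_i \;=\; (\text{reparametrized length of the }i\text{-th }\beta) \;+\; \sum_{j=1}^{m_i}(\text{reparametrized length of the }j\text{-th }\alpha) \;+\; \Theta_i ,
\]
where $\Theta_i\in[0,(K+1)M]$ collects the $m_i+1$ intervening gaps. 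If $c$ is taken large enough that $\ell_\alpha$ dominates $(K+1)M$ together with the total reparametrization stretch, the possible values of $E_{i+1}-E_i$ attached to different $m_i$ fall into pairwise disjoint intervals, so the sequence of inter-excursion gaps of $z_w$ recovers $w$. I then claim that $w\neq w'$ forces $z_w,z_{w'}$ to be $(T,\ep)$-separated: otherwise, whenever $z_w$ is near $q$ so is $z_{w'}$ and conversely, and since the excursion windows of each orbit are disjoint and ordered this forces them to match index by index, whence $E_{i+1}-E_i=E'_{i+1}-E'_i$ up to a negligible error for all $i$ --- contradicting $w\neq w'$.

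\textbf{Conclusion and the main obstacle.} This produces, for every $r$, a $(Cr,\ep)$-separated set of cardinality $K^r$, hence
\[
h_{\mathrm{top}}(f)\;\ge\;\limsup_{r\to\infty}\frac{\log K^{r}}{Cr}\;=\;\frac{\log K}{C}\;>\;0 .
\]
The extraction of $Y,q$ and the construction of $\alpha,\beta$ should be routine; I expect the real difficulty to lie in the separation step once reparametrizations are allowed. Under the \emph{weak} form of the property one has only limited control over how the individual blocks are reparametrized, and one must rule out that this distortion either overlaps the gap-ranges attached to distinct values of $m_i$, or pulls the ``near $q$'' and ``near $Y$'' portions of $z_w$ together. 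A workable route seems to be to bound the reparametrization stretch uniformly over the fixed finite family of segments actually used, and to calibrate $\ep,\tau_0,c$ and $K$ accordingly; handling this interplay cleanly is, I believe, the crux --- and presumably the source of the technical difficulties mentioned in the introduction.
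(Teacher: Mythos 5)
Your plan follows the familiar ``code words by gluing and separate'' outline, but the specific coding scheme you chose breaks under the \emph{weak} reparametrized property, and the fix you propose does not repair it.

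The gap is in the separation step, and it is exactly where you flagged the difficulty, but more severe than you seem to realize. You propose to distinguish $w$ and $w'$ by comparing the inter-excursion times $E_{i+1}-E_i$, which in un-reparametrized time are (roughly) $m_i\,\ell_\alpha + \Theta_i$. The weak property, however, only supplies an $M$-reparametrization with $M=M(\ep)$ fixed once and for all by the chosen scale; the distortion of a time interval of length $\ell$ is anywhere in $[\ell/M,\,M\ell]$. That distortion is \emph{multiplicative}, not an additive error that $\ell_\alpha$ can be chosen to dominate. Concretely, the reparametrized value of $E_{i+1}-E_i$ for $m_i=1$ may fall anywhere in roughly $[\ell_\alpha/M,\,M\ell_\alpha]$, and for $m_i=2$ in roughly $[2\ell_\alpha/M,\,2M\ell_\alpha]$; for these ranges to be disjoint you would need $M^2<2$. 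Increasing $c$ (hence $\ell_\alpha$) scales both intervals by the same factor and does not help, and neither does shrinking $\ep$, since the weak property allows $M(\ep)$ to grow as $\ep\to 0$. So for any fixed $K\ge 2$ there is no calibration of $\ep,\tau_0,c,K$ that makes the gap-ranges disjoint, and the separation claim collapses. (Your route would, incidentally, be viable under the \emph{non-weak} reparametrized property, where $\gamma$ is a $(1+\ep)$-reparametrization and one can arrange $(1+\ep)^2<2$.)

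The paper avoids this entirely by making the encoding \emph{positional} rather than arithmetic. After a non-minimality lemma (Lemma \ref{nonrecsf}) that gives a point $z$ with $d(f^t z,z)\ge\ep$ for all $t\ge\tau$, a second gluing step (Lemma \ref{stayawaysf}) produces two points $x,y$ that mutually avoid each other (and themselves) after time $T$, with $d(f^t x,y)\ge\ep$ for \emph{all} $t\ge 0$. Words over $\{1,2\}$ are encoded by the two templates $Q_1=\{(y,T_1)\}$ and $Q_2=\{(x,T_2),(x,T_2)\}$, with $T_1,T_2$ chosen so that the $M$-Lipschitz bounds can be propagated through a case analysis tracking the ``drift'' $r_j$ between the two reparametrized time-lines. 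Separation never comes from comparing lengths of inter-excursion intervals; it comes from finding a time at which one orbit is shadowing $y$ and the other is shadowing $x$ (or the same point after a delay $\ge T$), so the mutual-avoidance inequalities give the $\ep_0$-gap directly. That design is precisely what makes the argument robust to an $M$ that is large and uncontrollable, which is the regime the weak property forces you to handle.
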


\section{Preliminaries}
Let $(X,d)$ be a compact metric space. Denote by $f^t$ a flow or a semiflow on $X$.

\begin{definition}
For $L\ge1$, we call a strictly increasing continuous function 
$\gamma: [0,\infty)\to[0,\infty)$ 
an \emph{$L$-reparametrization} if $\gamma(0)=0$ and
$$L^{-1}\le\frac{\gamma(t_1)-\gamma(t_2)}{t_1-t_2}\le L\text{ for any
$t_1,t_2\in[0,\infty)$, $t_1\ne t_2$}.$$
\end{definition}

\begin{definition}\label{gapshadow}
 We call the finite sequence of ordered pairs
$$\sC=\{(x_j,m_j)\in X\times[0,\infty): 
j=1,\cdots, k\}$$
an \emph{orbit sequence of rank $k$}. A \emph{gap} for an orbit sequence
of rank $k$ is a $(k-1)$-tuple
$$\sg=\{t_j\in[0,\infty): j=1,\cdots, k-1\}.$$
Let $\gamma$ be a reparametrization.
For $\ep>0$, we say that $(\sC,\sg,\gamma)$ 
is be \emph{$\ep$-shadowed} by $z\in X$ if
for every $j=1,\cdots,k$,
$$(f^{\gamma(s_{j}+t)}(z), f^t(x_j))<\ep\text{ for every }t\in[0,m_j],$$
where
$$s_1=0\text{ and }s_j=\sum_{i=1}^{j-1}(m_i+t_i)\text{ for }j=2,\cdots,k.$$
\end{definition}

\begin{definition}\label{defrepglu}
We say that $(X,f)$  have 
\emph{reparametrized gluing orbit property},
 if for every $\ep>0$ there
is $M(\ep)>0$ such that for any orbit sequence $\sC$,
there are a \emph{gap} $\sg$ with $\max\sg\le M(\ep)$
and a $(1+\ep)$-reparametrization $\gamma$
such that 
$(\sC,\sg,\gamma)$  can be $\ep$-shadowed.
\end{definition}

Definition \ref{defrepglu} is the equivalent to the definition of 
reparametrized gluing orbit property introduced in
\cite{BTV0}. 
It is natural to expect that most results
that hold for gluing orbit property also hold for reparametrized gluing,
where the reparametrization tends to identity as $\ep$ goes to $0$.
However, we perceive that our result only requires a weaker condition, where
the reparametrization can be 
more flexible when $\ep$ gets smaller.

\begin{definition}
We say that $(X,f)$  has
\emph{weak reparametrized gluing orbit property}
 if for every $\ep>0$ there
is $M=M(\ep)>0$ such that for any orbit sequence $\sC$,
there are a \emph{gap} $\sg$ with $\max\sg\le M$
and an $M$-reparametrization $\gamma$
such that 
$(\sC,\sg,\gamma)$  can be $\ep$-shadowed.
\end{definition}

It is clear that reparametrized gluing implies weak reparametrized
gluing. 



\section{Proof of the Theorem}

Throughout this section, we assume that $(X,f)$ has 
weak reparametrized gluing property and it is not minimal.
We shall show that the topological entropy $h(f)>0$. 

\begin{lemma}\label{nonrecsf}
There are $z\in X$, $\ep>0$ and $\tau>0$ such that
$$d(f^t(z),z)\ge\ep\text{ for any }t\ge\tau.$$
\end{lemma}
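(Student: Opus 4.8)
The plan is to exploit non-minimality directly. Since $(X,f)$ is not minimal, there is a point $x \in X$ whose forward orbit closure $\overline{\{f^t(x) : t \ge 0\}}$ is a proper closed invariant subset $Y \subsetneq X$ (for a semiflow one works with the forward orbit closure; for a flow the full orbit closure works equally well, but the forward version suffices and is what the shadowing framework in Definition \ref{gapshadow} produces). Pick any $w \in X \setminus Y$ and set $\ep_0 = \tfrac{1}{3} d(w, Y) > 0$, so that the ball $B(w,\ep_0)$ is disjoint from $Y$, and in particular $f^t(x) \notin B(w,\ep_0)$ for all $t \ge 0$.

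Next I would produce the desired point $z$ as a shadowing point rather than as $w$ itself. Apply the weak reparametrized gluing orbit property with $\ep = \ep_0$ to get $M = M(\ep_0)$. Consider the rank-$2$ orbit sequence $\sC = \{(w, 0), (x, T)\}$ for a large $T > 0$ to be chosen: the first block is the single point $w$ observed for time $0$, the second block is the orbit of $x$ observed up to time $T$. There is a gap $\sg = \{t_1\}$ with $t_1 \le M$ and an $M$-reparametrization $\gamma$ so that some $z \in X$ $\ep_0$-shadows $(\sC,\sg,\gamma)$. The first block gives $d(z, w) < \ep_0$, hence $z \in B(w,\ep_0)$, so $z \notin Y$; better, $d(z,Y) \ge d(w,Y) - \ep_0 = 2\ep_0$. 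The second block gives, with $s_2 = t_1 \le M$,
$$ d\bigl(f^{\gamma(s_2 + t)}(z),\, f^t(x)\bigr) < \ep_0 \quad\text{for all } t \in [0,T]. $$
Since $f^t(x) \in Y$, this forces $f^{\gamma(s_2+t)}(z) \in B(Y,\ep_0)$, i.e. $d(f^{\gamma(s_2+t)}(z), Y) < \ep_0$, for every $t \in [0,T]$.

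Now I convert this into the statement of the lemma. As $t$ ranges over $[0,T]$, the reparametrized time $\gamma(s_2+t)$ ranges over the interval $[\gamma(t_1), \gamma(t_1 + T)]$, and since $\gamma$ is an $M$-reparametrization we have $\gamma(t_1) \le M t_1 \le M^2$ and $\gamma(t_1+T) \ge M^{-1} T$. Thus for every real $r$ with $M^2 \le r \le M^{-1} T$ we get $d(f^r(z), Y) < \ep_0$, while $d(z, Y) \ge 2\ep_0$, so $d(f^r(z), z) \ge d(z,Y) - d(f^r(z),Y) > \ep_0$. Choosing $T$ large enough that $M^{-1} T$ exceeds whatever bound we want, and setting $\tau = M^2$, we obtain $d(f^r(z), z) \ge \ep_0$ for all $r \in [\tau, M^{-1}T]$. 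To get the inequality for \emph{all} $r \ge \tau$ rather than on a bounded window, I would either let $T \to \infty$ and extract a limit point $z$ (using compactness of $X$ and a diagonal argument, noting $\tau = M^2$ and $\ep_0$ do not depend on $T$), or — cleaner — observe that one may instead take the second block to be a genuinely infinite piece of orbit by a direct compactness/limiting argument on the finite-$T$ shadowing points. Either way the constants $\ep := \ep_0$ and $\tau := M^2$ are uniform, so the limit point $z$ satisfies $d(f^t(z), z) \ge \ep$ for all $t \ge \tau$.

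The main obstacle is the last step: the gluing property as stated only shadows \emph{finite} orbit sequences, so it yields the escape property only on a bounded (though arbitrarily large) time interval $[\tau, M^{-1}T]$, and one must pass to a limit to get it for all $t \ge \tau$. The key point making the limit work is that $\tau = M^2$ and $\ep = \ep_0$ depend only on $\ep_0$ and $M(\ep_0)$, not on $T$; one also needs to check that the limiting point $z$ still satisfies $d(z,Y) \ge 2\ep_0$ (it does, since that is a closed condition preserved under the limit, as $z$ stays in the closed set $\overline{B(w,\ep_0)}$ which is disjoint from $Y$ after shrinking $\ep_0$ slightly if necessary). A minor technical wrinkle is ensuring the gap $t_1$ stays bounded by $M$ across all $T$, which is guaranteed since $M = M(\ep_0)$ is fixed once $\ep_0$ is fixed.
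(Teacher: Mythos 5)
Your proposal is correct and follows essentially the same route as the paper: both proofs use non-minimality to find an anchor that the orbit of some $x$ avoids, apply the gluing orbit property to the rank-$2$ sequence $\{(\text{anchor},0),(x,T)\}$ with $T\to\infty$, use the $M$-reparametrization bounds to show the shadowing point stays far from the anchor for all reparametrized times $\ge M^2$, and then pass to a subsequential limit (no diagonal argument is needed, since $\tau=M^2$ and $\ep$ are independent of $T$). The only cosmetic difference is that you measure distances to the orbit closure $Y$ while the paper measures distances to the single anchor point $y$; the bookkeeping is otherwise identical, including the choice $\ep=\tfrac13\delta$ and $\tau=M^2$.
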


\begin{proof}
As $f$ is not minimal, there is a point whose orbit is not dense.
We can find $x,y\in X$ and $\delta>0$
such that
$$d(f^t(x),y)\ge\delta\text{ for any }t\ge0.$$

Let $0<\ep<\frac13\delta$ and $M:=M(\ep)$. 
For each $n\in\ZZ^+$, consider
$$\sC_n:=\{(y,0),(x,n)\}.$$
There are $\tau_n\in [0,M]$ and an $M$-reparametrization $\gamma_n$
 such that 
$(\sC_n,\{\tau_n\},\gamma_n)$ is $\ep$-shadowed by $z_n$.
This  implies that for any $t\ge M^2$ and $n\ge Mt$,
$$d(f^t(z_n),y)= d(f^{\gamma_n(t_n)}(z_n),y)
\ge d(f^{t_n}(x),y)-d(f^{\gamma_n(t_n)}(z_n),f^{t_n}(x))> 2\ep,$$
where 
$$t_n:=\gamma_n^{-1}(t)\in[M^{-1}t,Mt]\subset[\tau_n,\tau_n+n].$$

Let 
$z$ be a subsequential limit of $\{z_{n}\}$. Then 
$$d(f^{t}(z),y)\ge\liminf_{n\to\infty} d(f^t(z_n),y)
\ge2\ep\text{ for any $t\ge M^2$}.$$
Note that 
$$d(z,y)\le\liminf_{n\to\infty}d(z_n,y)\le\ep.$$ 
So for $\tau:=M^2$,
$$d(f^{t}(z),z)\ge d(f^{t}(z),y)-d(z,y)\ge
\ep\text{ for any $t\ge\tau$}.$$
\end{proof}

\begin{lemma}\label{stayawaysf}
There are $x,y\in X$, $\ep>0$ and $T>0$ such that
\begin{align*}
d(f^t(x),x)&\ge\ep\text{ for any }t\ge T,\\
d(f^t(y),x)&\ge\ep\text{ for any }t\ge T,\\
 d(f^t(y),y)&\ge\ep\text{ for any }t\ge T,\text{ and }\\
 d(f^t(x),y)&\ge\ep\text{ for any }t\ge0.
\end{align*}
\end{lemma}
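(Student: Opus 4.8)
The plan is to obtain $x$ and $y$ as subsequential limits of shadowing points produced by the weak reparametrized gluing property applied to short, explicit orbit sequences, bootstrapping from Lemma~\ref{nonrecsf}. Let $z,\ep_0,\tau_0$ be as in Lemma~\ref{nonrecsf}, so $d(f^t(z),z)\ge\ep_0$ for $t\ge\tau_0$. A routine observation is that then $d(z,\omega(z))\ge\ep_0$; in particular $\omega(z)\ne X$ and $d(f^t(p),z)\ge\ep_0$ for every $p\in\omega(z)$ and every $t\ge0$. The essential point is that the last requirement, $d(f^t(x),y)\ge\ep$ for \emph{all} $t\ge0$, forces $\overline{\mathcal{O}^+(x)}\ne X$ (otherwise no $y$ could be $\ep$-separated from it), so the argument naturally splits according to whether $\overline{\mathcal{O}^+(z)}=X$.

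\textbf{Case 1: $\overline{\mathcal{O}^+(z)}\ne X$.} Pick $v_0$ with $\rho:=d(v_0,\overline{\mathcal{O}^+(z)})>0$, set $\ep:=\tfrac14\min(\ep_0,\rho)$, $M:=M(\ep)$, and take $x:=z$. Since $d(f^t(z),v_0)\ge\rho$ for all $t\ge0$, the pair $(z,v_0)$ is a non-density pair, and I would run the construction from the proof of Lemma~\ref{nonrecsf} on the orbit sequences $\{(v_0,0),(z,n)\}$, letting $y$ be a subsequential limit of the shadowing points. The key bookkeeping is that an $M$-reparametrization stretches time by at most a factor $M$, so the block that shadows the segment $\{f^r(z):r\in[0,n]\}$ is read off at (absolute) times $t$ with the corresponding $r$ in $[\,t/M-M,\ Mt\,]$; hence for $t\ge T:=M(\tau_0+M)$ the point $f^t(y)$ lies within $\ep$ of some $f^s(z)$ with $s\ge\tau_0$. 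From $d(y,v_0)\le\ep$, $d(f^s(z),z)\ge\ep_0$ and $d(f^s(z),v_0)\ge\rho$ one then reads off $d(f^t(y),z)\ge2\ep$ and $d(f^t(y),y)\ge2\ep$ for $t\ge T$, i.e. the second and third inequalities with $x=z$; the first inequality is Lemma~\ref{nonrecsf}; and the fourth holds because $d(y,\overline{\mathcal{O}^+(z)})\ge\rho-\ep\ge\ep$.

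\textbf{Case 2: $\overline{\mathcal{O}^+(z)}=X$.} Here $z$ is useless and one must manufacture a replacement $x$ with a genuinely non-dense forward orbit. Since $(X,f)$ is not minimal there is a minimal set $K\subsetneq X$; fix $y_0\in K$ and a point $w$ with $d(w,K)$ large relative to $\ep$, and let $x$ be a subsequential limit of points shadowing $\{(w,0),(y_0,n)\}$. Then $d(x,w)\le\ep$ while the whole tail $\{f^t(x):t\ge M^2\}$ lies within $\ep$ of $K$; this gives $d(f^t(x),x)\ge d(w,K)-2\ep$ for $t\ge M^2$ (the first inequality) and it confines $\overline{\mathcal{O}^+(x)}$ to the union of a compact orbit arc with an $\ep$-neighborhood of $K$. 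Because the weak gluing property forces $X$ to be connected (a point $\ep$-close to one clopen piece of $X$ can never be steered $\ep$-close to another, as a single short gap cannot move an orbit between forward-invariant clopen pieces) and $X$ is not minimal, such a set cannot be all of $X$, so there is a point $v$ at a definite distance from both $\overline{\mathcal{O}^+(x)}$ and $K$. Taking $y$ to be a subsequential limit of points shadowing $\{(v,0),(y_0,n)\}$ and repeating the estimates of Case 1, with $K$ in the role of the ``escape region'', yields the remaining three inequalities.

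The step I expect to be the main obstacle is Case 2: producing an $x$ that is \emph{simultaneously} forward non-recurrent and forward non-dense using only the gluing property (Lemma~\ref{nonrecsf} being unavailable there), together with the quantitative bookkeeping, since one must choose $\ep$, and hence $M=M(\ep)$, before one knows the length $M^2$ of the uncontrolled initial orbit arc of $x$; making all the constants fit requires some care and, in particular, ruling out the degenerate possibility that every point outside $K$ has dense forward orbit — once again by invoking the gluing property.
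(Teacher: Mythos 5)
Your Case 1 is essentially sound, but Case 2 contains a genuine gap, and the paper's proof avoids the dichotomy entirely.

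The gap in Case 2: you need $\overline{\mathcal{O}^+(x)}\ne X$ for the manufactured $x$, and your argument is that $\overline{\mathcal{O}^+(x)}$ is confined to the union of a compact orbit arc $\{f^t(x):0\le t\le M^2\}$ with a closed $\ep$-neighborhood of $K$, hence (by connectedness of $X$) cannot be all of $X$. This does not follow. Connectedness of $X$ says nothing about whether a compact arc can swallow the complement of $\overline{B(K,\ep)}$; in a one-dimensional $X$ there is no a priori obstruction, and the arc length $M^2$ is already locked in once $\ep$ is chosen, so you cannot shrink $\ep$ afterwards to make the arc ``small.'' You correctly flag this as the main obstacle, but the connectedness remark does not close it.

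The paper's proof sidesteps the case split. Starting from the non-recurrent point $x$ of Lemma~\ref{nonrecsf}, it shadows the orbit sequence $\{(x,M\tau),(x,n)\}$: a short initial piece of the orbit of $x$, a bounded gap, then a long piece of the orbit of $x$. The limit shadow point $y$ is then automatically within $\ep_1$ of $x$, and non-recurrence of $x$ alone yields the second and third inequalities via the triangle inequality (and the first and the tail of the fourth come for free). For the fourth inequality on $[0,T]$, instead of requiring $y$ to be far from $\overline{\mathcal{O}^+(x)}$ at a \emph{prescribed} scale, the paper only shows $f^t(x)\ne y$ for all $t\ge0$ — the key observation being that the shadow $y$ returns $\ep_1$-close to $x$ at some finite time $T_0$, which no point $f^{T_0+t}(x)$ with $t\ge0$ can do by non-recurrence — and then defines $\ep:=\min_{0\le t\le T}d(f^t(x),y)>0$ \emph{after the fact} by compactness. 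This is why the paper never needs to know whether $\mathcal{O}^+(x)$ is dense: the separation $\ep$ is allowed to be small and $x$-and-$y$-dependent, so the implication ``Lemma~\ref{stayawaysf} $\Rightarrow$ $\overline{\mathcal{O}^+(x)}\ne X$,'' which you correctly noted, becomes an output of the construction rather than an input. If you want to repair your proposal, replacing the Case 2 construction with the paper's choice of orbit sequence and the $T_0$-argument for the fourth inequality is the natural fix.
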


\begin{proof}
By Lemma \ref{nonrecsf}, there is $x\in X$, $\ep_0>0$ 
and $\tau>0$ such that
$$d(f^t(x),x)\ge\ep_0\text{ for every $t\ge \tau$}.$$ 
Let $\ep_1:=\frac13\ep_0$ and 
$M:=M(\ep_1)>1$. For each $n$, there are $\tau_n\in[0, M]$ and
an $M$-reparametrization $\gamma_n$ 
such that
$$(\{(x,M\tau),(x,n)\},\{\tau_n\},\gamma_n)$$ is 
$\ep_1$-shadowed by $y_n$.
Let 
$$T:=M(M\tau+M))>M\tau>\tau.$$
For any $t\ge T$ and $n\ge Mt$,
$$d(f^t(y_n),x)= d(f^{\gamma_n(t_n)}(y_n),x)
\ge d(f^{t_n}(x),x)-d(f^{\gamma_n(t_n)}(y_n),f^{t_n}(x))> 2\ep_1,$$
where 
$$t_n:=\gamma_n^{-1}(t)\in[M^{-1}t,Mt]
\subset[M\tau+\tau_n,M\tau+\tau_n+n]
\text{ and }t_n>\tau.$$

As $\tau\le\gamma_n(M\tau+\tau_n)\le T$ for each $n$, there is a subsequence such
that
$$\gamma_{n_k}(M\tau+\tau_{n_k})\to T_0\ge \tau\text{ as }n_k\to\infty.$$
Let $y$ be a subsequential limit of $\{y_{n_k}\}$. 
Note that $d(x,y)<\ep_1$. So we have for any $t\ge T$:
\begin{align}
d(f^{t}(y),x)&
\ge \liminf_{n\to\infty} d(f^t(y_n),x)
\ge2\ep_1, \notag
\\
d(f^{t}(y),y)&
\ge d(f^{t}(y),x)-d(x,y)
\ge 
\ep_1,\text{ and}\notag\\
d(f^{t}(x),y)&\ge d(f^{t}(x),x)-d(x,y)\ge
2\ep_1.\label{awayy}
\end{align}
 
For any $t\ge 0$,
$$d(f^{T_0}(y), x)\le\limsup_{n_k\to\infty}
d(f^{\gamma_{n_k}(M\tau+\tau_{n_k})}(y_{n_k}),x)
\le\ep_1<\ep_0\le d(f^{T_0+t}(x),x).$$
This guarantees that $f^t(x)\ne y$
for any $t\ge 0$,
Let 
$$\ep:=\min\{d(f^t(x),y):0\le t\le T\}.$$
Then $\ep\in(0,\ep_1)$. Together with \eqref{awayy} we have
$$d(f^t(x),y)\ge\ep\text{ for every }t\ge 0.$$
\end{proof}

\begin{proposition}
$(X,f)$ has positive topological entropy.
\end{proposition}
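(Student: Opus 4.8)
The plan is to use the two distinguished points $x,y$ from Lemma~\ref{stayawaysf}, together with the separation constant $\ep>0$ and time $T>0$, to build an exponentially growing family of $(n,\delta)$-separated sets for a suitable $\delta$, thereby forcing $h(f)>0$. Fix a small $\ep'>0$ (of size comparable to $\ep$, to be pinned down) and let $M:=M(\ep')$ be the constant from the weak reparametrized gluing orbit property. For a large integer $N$ (chosen below) and any binary word $\omega=(\omega_1,\dots,\omega_n)\in\{0,1\}^n$, form the orbit sequence
$$\sC_\omega:=\{(w_1,N),(w_2,N),\dots,(w_n,N)\},\quad w_i=\begin{cases} x & \omega_i=0,\\ y & \omega_i=1.\end{cases}$$
By weak reparametrized gluing there are a gap $\sg_\omega$ with $\max\sg_\omega\le M$ and an $M$-reparametrization $\gamma_\omega$ such that $(\sC_\omega,\sg_\omega,\gamma_\omega)$ is $\ep'$-shadowed by some $z_\omega\in X$. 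The key point is that the total reparametrized time length of the shadowing orbit is bounded: with $s_i^\omega$ as in Definition~\ref{gapshadow} we have $s_n^\omega+N\le n(N+M)$, so the whole coding lives inside a time window of length at most $\gamma_\omega(n(N+M))\le M n(N+M)$.

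The main step is to show that distinct words give $z_\omega$ that are separated in a Bowen metric of controlled length. Suppose $\omega\ne\omega'$, and let $i$ be the first index where they differ, say $\omega_i=0$, $\omega_i'=1$. During the $i$-th block, $f^{\gamma_\omega(s_i^\omega+t)}(z_\omega)$ stays $\ep'$-close to $f^t(x)$ while $f^{\gamma_{\omega'}(s_i^{\omega'}+t)}(z_{\omega'})$ stays $\ep'$-close to $f^t(y)$, for $t\in[0,N]$. Because $\gamma_\omega$ and $\gamma_{\omega'}$ are $M$-reparametrizations and the blocks $1,\dots,i-1$ carry the same points, the two ``reading heads'' $\gamma_\omega(s_i^\omega+\cdot)$ and $\gamma_{\omega'}(s_i^{\omega'}+\cdot)$ cannot drift apart too fast; quantitatively one can find a common real time $u$ (within the overlap of the two windows) and parameters $t,t'\in[0,N]$ with $\gamma_\omega(s_i^\omega+t)=u=\gamma_{\omega'}(s_i^{\omega'}+t')$ and $|t-t'|$ small relative to $N$ — or, more robustly, one uses continuity of the flow together with the inequality $d(f^{t}(x),y)\ge\ep$ from Lemma~\ref{stayawaysf} (valid for \emph{all} $t\ge 0$, which is exactly why that clause was proved) to conclude $d(f^u(z_\omega),f^u(z_{\omega'}))\ge\ep/2$ for some $u$ in a window of length $\le M n(N+M)$. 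Hence the map $\omega\mapsto z_\omega$ is injective and its image is $(M n(N+M),\ep/2)$-separated in the sense that for any two distinct images there is a time $u\le M n(N+M)$ realizing $\ep/2$-separation. This yields $N_{\mathrm{sep}}(M n(N+M),\ep/2)\ge 2^n$, and therefore
$$h(f)\ge h(f,\ep/2)\ge\limsup_{n\to\infty}\frac{\log 2^n}{M n(N+M)}=\frac{\log 2}{M(N+M)}>0.$$

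I expect the genuine obstacle to be controlling the reparametrizations: unlike in the non-reparametrized gluing case, $\gamma_\omega$ depends on $\omega$ and is only an $M$-bi-Lipschitz time change, so one must argue that the ``positions'' $\gamma_\omega(s_i^\omega+t)$ and $\gamma_{\omega'}(s_i^{\omega'}+t')$ along the first discrepant block cannot be perpetually dragged out of each other's range — in particular that the windows $[\gamma_\omega(s_i^\omega),\gamma_\omega(s_i^\omega+N)]$ and $[\gamma_{\omega'}(s_i^{\omega'}),\gamma_{\omega'}(s_i^{\omega'}+N)]$, each of real-time length in $[N/M,MN]$, overlap in a subinterval of length $\ge cN$ for some $c=c(M)>0$. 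Granting that, the separation is immediate from the last inequality of Lemma~\ref{stayawaysf}, which holds for all $t\ge 0$ and has no ``$t\ge T$'' restriction, so no alignment of blocks is needed beyond landing in the overlap. A secondary technicality is that $N$ must be taken large compared with $M$ (so that $[N/M,MN]$-windows built on offsets differing by at most $M$ genuinely overlap), and $\ep'$ small compared with $\ep$ (so that the $\ep'$-shadowing tolerances do not eat the $\ep$-separation); both are harmless choices of constants. The first three clauses of Lemma~\ref{stayawaysf} (the ``$t\ge T$'' self- and cross-returns) are what one would need for the analogous statement phrased with a single base orbit visiting two prescribed regions; in the present block-coding argument the crucial input is really the uniform-in-$t$ clause $d(f^t(x),y)\ge\ep$.
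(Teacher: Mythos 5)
The proposal has a genuine gap that traces back to the asymmetry of Lemma~\ref{stayawaysf}. You use symmetric blocks $(x,N)$ and $(y,N)$ and claim that, at the first coordinate $i$ where $\omega$ and $\omega'$ differ, the clause $d(f^t(x),y)\ge\ep$ (valid for all $t\ge 0$) forces separation inside the overlap. But landing a common real time $u$ in the overlap only puts $f^u(z_\omega)$ near $f^t(x)$ and $f^u(z_{\omega'})$ near $f^{t'}(y)$ for \emph{some} $t,t'\in[0,N]$; to conclude separation you need $d(f^t(x),f^{t'}(y))\ge\ep$, and Lemma~\ref{stayawaysf} only delivers this if one of $t,t'$ is $0$ --- namely $t'=0$ (giving $d(f^t(x),y)\ge\ep$) or $t=0$ with $t'\ge T$ (giving $d(f^{t'}(y),x)\ge\ep$). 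You cannot control which reading head reaches its block first. In the bad case --- the $y$-tracking orbit's block $i$ starts earlier in real time, and the drift $r$ at the moment the $x$-tracking orbit's block $i$ begins satisfies $0<r<T$ --- the only inequality you could hope to invoke is $d(f^{r}(y),x)\ge\ep$, which Lemma~\ref{stayawaysf} guarantees \emph{only} for $r\ge T$. Indeed nothing in that lemma rules out $x=f^{r_0}(y)$ for some $0<r_0<T$; in that event both shadowing points are near $x$ at the relevant moment and no separation occurs. So the claim that ``no alignment of blocks is needed'' and that only the $t\ge0$ clause is used is wrong, and with symmetric blocks the argument cannot be completed.

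This is exactly why the paper's construction is asymmetric: $Q_1=\{(y,T_1)\}$ is a single long $y$-block, while $Q_2=\{(x,T_2),(x,T_2)\}$ is two shorter $x$-blocks, with $T_1=T+M^2(T_2+M)\gg T_2=M^2(M+T)+T$. In the bad case above (the $y$-block ahead, drift $r_l<T$), the paper waits until the \emph{second} $x$-block of $Q_2$ starts: by then the reading head of the $y$-orbit has advanced at least $M^{-1}T_2>MT\ge T$ into its $y$-block (so the clause $d(f^r(y),x)\ge\ep$ for $r\ge T$ applies), while $T_1$ is chosen large enough that the long $y$-block has not yet ended, so the comparison is still against the single point $x$. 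A secondary issue you dismiss too quickly is cumulative drift: since $\gamma_\omega$ and $\gamma_{\omega'}$ are independent $M$-reparametrizations and the gaps differ, the offsets $\gamma_\omega(s_i^\omega)$ and $\gamma_{\omega'}(s_i^{\omega'})$ can differ by an amount growing linearly in $i$, so the two ``block $i$'' windows need not overlap at all. The paper handles this separately (its case (2)) by locating the first index $k$ at which the drift $r_k$ exceeds $T$ and extracting separation there; this is where the other three clauses of Lemma~\ref{stayawaysf}, with their $t\ge T$ hypothesis, are genuinely used.
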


\begin{proof}
Let $x,y\in X$, $\ep>0$ and $T>0$ be as in Lemma \ref{stayawaysf}.
Let $0<\ep_0<\frac13\ep$ and $M:=M(\ep_0)>1$.
Denote 
$$T_2:=M^2(M+T)+T
\text{ and }T_1:=T+M^2(T_2+M)>T_2.$$ Let
$$Q_1:=\{(y,T_1)\}\text{ and }Q_2:=\{(x,T_2),(x,T_2)\}.$$

Let $n\in\ZZ^+$. For each 
$\xi=\{\omega_k(\xi)\}_{k=1}^n\in\{1,2\}^n$, consider
$$\sC_\xi:=\{Q_{\omega_k(\xi)}:k=1,\cdots, n\}=\{(x_j(\xi),m_j(\xi)):j=1,\cdots,n(\xi)\},$$
where
$$n(\xi)=\sum_{k=1}^n\omega_k(\xi).$$
There are $$\sg_\xi=\{t_j(\xi):j=1,\cdots,n(\xi)-1\}$$ 
with $\max\sg_\xi\le M$ and an $M$-reparametrization $\gamma_\xi$ such
that $(\sC,\sg)$ is $\ep_0$-shadowed by $z_\xi\in X$.
For each $\xi$, denote
$$s_1(\xi):=0\text{ and }s_j(\xi):=\sum_{i=1}^{j-1}(m_i(\xi)+t_i(\xi))\text{
for }j=2,\cdots,n(\xi).$$
Then
$$s_{n(\xi)}(\xi)< nT_3\text{ for every }\xi\in\{1,2\}^n,$$
where $$T_3:=\max\{T_1+M,2T_2+2M\}.$$

We claim that if $\xi\ne\xi'$ then there is 
$$s\le\max\{\gamma_{\xi}(s_{n(\xi)}(\xi)),\gamma_{\xi'}(s_{n(\xi')}(\xi'))\}<nMT_3$$ 
such that
$$d(f^s(z_\xi),f^s(z_{\xi'}))>\ep_0.$$

Assume that $x_j(\xi)=x_j(\xi')$ for $j=1,\cdots,l-1$,
$x_l(\xi)=y$ and  $x_l(\xi')=x$.

 For $j<k$, denote
$$r_j:=\begin{cases}\gamma_{\xi}^{-1}(\gamma_{\xi'}(s_j(\xi')))-s_j(\xi),&\text{
if }\gamma_{\xi}(s_j(\xi))\le\gamma_{\xi'}(s_j(\xi'));\\
\gamma_{\xi'}^{-1}(\gamma_\xi(s_j(\xi)))-s_j(\xi'),&\text{
if }\gamma_{\xi}(s_j(\xi))>\gamma_{\xi'}(s_j(\xi')).
\end{cases}$$
Our discussion can be split into the following
cases.
\begin{enumerate}[\bf
(1)]
\item \emph{When $l=1$}.\\ Then
$$d(z_\xi,z_{\xi'})\ge d(x,y)-d(z_\xi,x)-d(z_{\xi'},y)>\ep_2.$$
We can take $s=0$.
\item \emph{When $l\ge2$ and there is $k\le l$ with $r_k\ge T$.}\\
 We may assume that $k$
is the smallest index with $r_k\ge T$. 
As $r_1=0$, we have $k\ge 2$ and hence
$r_{k-1}<T$. 
We assume that 
$\gamma_{\xi}(s_k(\xi))\le\gamma_{\xi'}(s_k(\xi'))$.
Argument for the subcase 
$\gamma_{\xi}(s_k(\xi))>\gamma_{\xi'}(s_k(\xi'))$
is analogous.
\begin{enumerate}[\bf 
{(2.}1)]
\item \emph{When
$\gamma_{\xi}(s_k(\xi))\le\gamma_{\xi'}(s_{k-1}(\xi')+m_{k-1}(\xi'))$}.\\
Note that
\begin{align*}
\gamma_{\xi}(s_k(\xi))\ge{}&\gamma_{\xi}(s_{k-1}(\xi)+m_{k-1}(\xi))
\\\ge{}&
\gamma_{\xi}(s_{k-1}(\xi)+T_2)
\\\ge{}&
\gamma_{\xi}(s_{k-1}(\xi)+r_{k-1}+T_2-T)
\\\ge{}&
\gamma_{\xi}(s_{k-1}(\xi)+r_{k-1})+M^{-1}(T_2-T)\\
>{}&
\gamma_{\xi'}(s_{k-1}(\xi'))+MT\\
\ge{}&
\gamma_{\xi'}(s_{k-1}(\xi')+T).
\end{align*}
There is $r\in(T,m_{k-1}(\xi')]$ such that
$$\gamma_{\xi}(s_k(\xi))=\gamma_{\xi'}(s_{k-1}(\xi')+r).$$
Then
\begin{align*}
&d(f^{\gamma_{\xi}(s_k(\xi))}(z_\xi), f^{\gamma_{\xi}(s_k(\xi))}(z_{\xi'}))\\
\ge{} &d(f^r(x_{k-1}(\xi')),x_k(\xi))-
d(f^{\gamma_\xi(s_k(\xi))}(z_\xi),x_k(\xi))
\\&-
d(f^{\gamma_{\xi'}(s_{k-1}(\xi')+r)}(z_{\xi'}),f^r(x_{k-1}(\xi')))\\
>{}&\ep_0.
\end{align*}
We can take $s=\gamma_{\xi}(s_k(\xi))$.
\item \emph{When
$\gamma_{\xi}(s_k(\xi))>\gamma_{\xi'}(s_{k-1}(\xi')+m_{k-1}(\xi'))$}.\\
We have
\begin{align*}
r_k={}&\gamma_{\xi}^{-1}(\gamma_{\xi'}(s_k(\xi')))-s_k(\xi)
\\\le{}&M((\gamma_{\xi'}(s_{k-1}(\xi')+m_{k-1}(\xi')+t_{k-1}(\xi'))
-\gamma_{\xi}(s_k(\xi)))
\\\le{}&M((\gamma_{\xi'}(s_{k-1}(\xi')+m_{k-1}(\xi')+M)
-\gamma_{\xi'}(s_{k-1}(\xi')+m_{k-1}(\xi')))
\\\le{}&M^3<T_2.
\end{align*}
Then $r_k\in(T,T_2)$ implies that
\begin{align*}
&d(f^{\gamma_{\xi'}(s_k(\xi'))}(z_\xi), f^{\gamma_{\xi'}(s_k(\xi'))}(z_{\xi'}))\\
\ge{} &d(f^{r_k}(x_k(\xi)),x_k(\xi'))-
d(f^{\gamma_\xi(s_k(\xi)+r_k)}(z_\xi),f^{r_k}(x_k(\xi)))
\\&-
d(f^{\gamma_{\xi'}(s_k(\xi'))}(z_{\xi'}),x_k(\xi'))\\
>{}&\ep_0.
\end{align*}
\end{enumerate}
We can take $s=\gamma_{\xi'}(s_k(\xi'))$.

\item \emph{When $l\ge2$ and $r_{l}<T$}.
\begin{enumerate}[\bf {(3.}1)]
\item
\emph{When 
$\gamma_{\xi'}(s_l(\xi'))\le\gamma_{\xi}(s_l(\xi))$.}\\
Note that $r_l<T<T_2$.
We have
\begin{align*}
&d(f^{\gamma_\xi(s_l(\xi))}(z_\xi), f^{\gamma_\xi(s_l(\xi))}(z_{\xi'}))\\
\ge{} &d(y,f^{r_l}(x))-
d(f^{\gamma_\xi(s_l(\xi))}(z_\xi),y)-
d(f^{\gamma_{\xi'}(s_l(\xi')+r_l)}(z_{\xi'}),f^{r_l}(x))\\
>{}&\ep_0.
\end{align*}
We can take $s=\gamma_{\xi}(s_l(\xi))$.
\item
\emph{When 
$\gamma_{\xi}(s_l(\xi))<\gamma_{\xi'}(s_l(\xi'))$.}\\
Note that by the definitions of $\sC_\xi$ and $\sC_{\xi'}$, we must have
$$x_{l+1}(\xi')=x, m_{l}(\xi')=T_2\text{ and }n(\xi')\ge l+1.$$
Then in this case we have 
\begin{align*}
\gamma_{\xi'}(s_{l+1}(\xi'))
={}&\gamma_{\xi'}(s_{l}(\xi')+m_{l}(\xi')+t_l(\xi'))
\\\ge{}&
\gamma_{\xi'}(s_{l}(\xi'))+M^{-1}T_2\\
>{}&
\gamma_{\xi}(s_{l}(\xi))+MT\\
\ge{}&
\gamma_{\xi}(s_{l}(\xi)+T),\\
\gamma_{\xi'}(s_{l+1}(\xi'))
={}&\gamma_{\xi'}(s_{l}(\xi')+m_{l}(\xi')+t_l(\xi'))
\\\le{}&
\gamma_{\xi'}(s_{l}(\xi'))+M(T_2+M)\\
={}&
\gamma_{\xi}(s_{l}(\xi)+r_l)+M(T_2+M)\\
\le{}&
\gamma_{\xi}(s_{l}(\xi)+T+M^2(T_2+M)).
\\={}&
\gamma_{\xi}(s_{l}(\xi)+T_1).
\end{align*}
So there is $r\in(T,T_1]$ such that
$$\gamma_{\xi'}(s_{l+1}(\xi'))=\gamma_{\xi}(s_{l}(\xi)+r).$$
This yields that
\begin{align*}
&d(f^{\gamma_{\xi'}(s_{l+1}(\xi'))}(z_\xi), f^{\gamma_{\xi'}(s_{l+1}(\xi'))}(z_{\xi'}))\\
\ge{} &d(f^r(y),x)-
d(f^{\gamma_\xi(s_l(\xi)+r)}(z_\xi),f^{r}(y))-
d(f^{\gamma_{\xi'}(s_{l+1}(\xi'))}(z_{\xi'}),x)\\
>{}&\ep_0.
\end{align*}
We can take $s=\gamma_{\xi'}(s_{l+1}(\xi'))$.
\end{enumerate}

\end{enumerate}

Above argument shows that 
$$E:=\{z_\xi: \xi\in\{1,2\}^n\}$$
is an $(nMT_3,\ep_0)$-separated subset of $X$
that contains $2^n$ points.
Hence
$$h(f)\ge\limsup_{n\to\infty}\frac{\ln s(nMT_3,\ep_0)}{nMT_3}\ge
\limsup_{n\to\infty}\frac{n\ln
2}{nMT_3}=\frac{\ln2}{MT_3}>0.$$

\end{proof}

\section*{Acknowledgments}
 We would like to thank Paolo
Varandas for suggestions and comments.
The author is supported by NSFC No. 11571387.



\end{document}